\newtheorem{theorem}{Theorem}[section]
\newtheorem{corollary}[theorem]{Corollary}
\newtheorem{conjecture}[theorem]{Conjecture}
\newtheorem{construction}[theorem]{Construction}
\theoremstyle{definition}
\newcommand{\R}{\mathbb R}
\newcommand{\C}{\mathbb C}
\author{Frank de Zeeuw}
\title{Spanned lines and Langer's inequality}
\begin{document}
\date{}
\maketitle

\begin{abstract}
We collect some results in combinatorial geometry that follow from an inequality of Langer in algebraic geometry.
Langer's inequality gives a lower bound on the number of incidences between a point set and its spanned lines, and was recently used by Han to improve the constant in the weak Dirac conjecture.
Here we observe that this inequality also leads to improved constants in Beck's theorem,
which states that a finite point set in $\R^2$ or $\C^2$ has many points on a line or spans many lines. 

Most of the proofs that we use are not original, and the goal of this note is mainly to carefully record the quantitative results in one place.
We also include some discussion of possible further improvements to these statements.
\end{abstract}

\section{Introduction}

We consider the following standard notions from combinatorial geometry.
For a point set $P$ in a space $\R^d$ or $\C^d$, we write $L(P)$ for the set of lines spanned by $P$,
and we set $n = |L(P)|$.
We write $\ell_i$ for the number of lines in $L(P)$ containing exactly $i$ points of $P$.
We have the basic equalities
\begin{equation}\label{eq:basic}
\sum_{i\geq 2} \ell_i = n,~~~ 
\sum_{i\geq 2}i\ell_i 
= I(P,L(P)),
~~~\text{and}~~~
\sum_{i\geq 2} \binom{i}{2}\ell_i = \binom{n}{2},
\end{equation}
where 
$I(P,L(P))$ denotes the number of incidences between $P$ and $L(P)$, 
i.e., the number of pairs $(p,\ell)\in P\times L(P)$ such that $p\in \ell$.

We consider the implications of the following result of Langer \cite[Proposition 11.3.1]{L}. 

\begin{theorem}[Langer]\label{thm:langer}
Let $P$ be a set of $n$ points in $\C^2$,
with at most $2n/3$ points collinear.
Then
\begin{equation}\label{eq:langer}
\sum_{i\geq 2} i \ell_i\geq \frac{n(n+3)}{3}.
\end{equation}
\end{theorem}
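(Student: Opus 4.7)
The plan is to invoke the algebraic-geometric machinery underlying Langer's inequality \cite{L}, which ultimately descends from the Bogomolov--Miyaoka--Yau inequality for surfaces of log general type. I would first pass from the affine set $P\subset\C^2$ to a configuration in $\mathbb{P}^2(\C)$ by adding a line at infinity in sufficiently general position so that no additional incidences arise, and write $\mathcal{A}=L(P)$ for the resulting projective line arrangement.

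Next, I would construct a log-smooth pair $(X,D)$ by blowing up $\mathbb{P}^2$ at every point lying on at least two lines of $\mathcal{A}$ and taking $D$ to be the sum of the strict transforms of the lines in $\mathcal{A}$ together with suitably weighted exceptional divisors. On this pair one computes the log Chern numbers $\bar c_1^2(X,D)=(K_X+D)^2$ and $\bar c_2(X,D)$ in closed form: by standard formulas for blow-ups the self-intersection becomes a polynomial in $n$ and $\sum_i g(i)\ell_i$ for simple weights $g$, while $\bar c_2$ is the topological Euler characteristic of the open stratum of $X\setminus D$ and reduces by inclusion-exclusion to an expression in $n$, $|P|$, and the $\ell_i$.

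Under the hypothesis that no line carries more than $2n/3$ points of $P$, one verifies that $(X,D)$ is of log general type, so that Langer's inequality $\bar c_1^2\leq 3\bar c_2$ applies. Substituting the closed-form expressions and simplifying using \eqref{eq:basic} (to eliminate $\sum\binom{i}{2}\ell_i$ and $\sum\ell_i$ in favor of $n$) then produces the target $\sum_{i\geq 2} i\ell_i\geq n(n+3)/3$.

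The main obstacle is precisely the verification of the log general type hypothesis: one must choose the boundary coefficients carefully and certify that $K_X+D$ is big, and this is the step where the threshold $2n/3$ enters naturally. A line containing more than $2n/3$ of the points of $P$ would contribute too much negatively to $(K_X+D)^2$ and force $(X,D)$ out of the regime where the Chern class inequality holds; the $2n/3$ bound is exactly the sharp value that keeps the pair on the right side of this threshold for Langer's refinement of BMY. Given that key input, the remaining work is a careful but essentially routine translation between the intersection-theoretic data on the blow-up and the combinatorial counts $\ell_i$.
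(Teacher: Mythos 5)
The paper does not actually prove Theorem~\ref{thm:langer}: it quotes the statement from Langer \cite[Proposition 11.3.1]{L} and explicitly treats it as a black box. So the question is whether your sketch of the underlying derivation would work, and as written it would not. The central problem is your choice of arrangement. You set $\mathcal{A}=L(P)$, the set of spanned lines, and propose to blow up every point lying on at least two lines of $\mathcal{A}$. But $L(P)$ typically consists of $\Theta(n^2)$ lines, and its singular locus contains, besides the points of $P$, a large number of intersection points of spanned lines that do not belong to $P$. The log Chern numbers of the resulting pair therefore depend on the full incidence combinatorics of $L(P)$, not on the quantities $n$ and $\ell_i$ appearing in the theorem, and the concluding ``routine translation'' cannot produce $\sum_{i\geq 2} i\ell_i\geq n(n+3)/3$. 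The object to which the orbifold Bogomolov--Miyaoka--Yau machinery must be applied is the arrangement of $n$ lines projectively \emph{dual} to $P$: under duality the points of $P$ become the $n$ lines of the arrangement, the spanned lines become its singular points, $\ell_i$ becomes the number of points of multiplicity $i$, and the hypothesis ``at most $2n/3$ points of $P$ collinear'' becomes ``no point lies on more than $2n/3$ lines of the arrangement,'' which is exactly the form in which Langer, Bojanowski and Pokora state and prove the inequality. (The ``line at infinity'' step is also a red herring: embedding $\C^2$ into $\mathbb{P}^2$ requires no genericity choice since no point of $P$ is at infinity, while actually adding a line to the arrangement would alter the counts.)

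Separately, the step you yourself identify as ``the main obstacle'' --- choosing the boundary coefficients, certifying that Langer's Chern-class inequality applies, and seeing why the threshold $2n/3$ is exactly right --- is the entire content of Langer's proof, and you leave it unverified. Even with the correct dual setup, what remains is an outline that defers the only nontrivial step to the very result being proved. Since the paper deliberately uses Theorem~\ref{thm:langer} as an external input, the appropriate ``proof'' here is the citation of \cite[Proposition 11.3.1]{L}; a genuine derivation would require carrying out the dualization, the explicit computation of $(K_X+D)^2$ and of the orbifold Euler number for the weighted blow-up, and the verification of Langer's hypotheses, none of which is present in the proposal.
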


Langer derived this theorem from his generalization of the Bogomolov--Miyaoka--Yau inequality for algebraic surfaces (we will not try to explain the underlying theory here, and we will consider Theorem \ref{thm:langer} as a black box).
Langer's work was published in 2003, but it appears that the inequality has long gone unnoticed in combinatorial geometry.
For instance, it is sharper than a later result of Payne and Wood \cite[Theorem 4]{PW}.
Recently, Han \cite{Ha} uncovered Theorem \ref{thm:langer} and used it to improve the constant in the weak Dirac conjecture.
To be precise, Han obtained \eqref{eq:langer} in the middle of the proof of \cite[Theorem 10]{Ha} as a consequence of a Hirzebruch-type inequality (stated as \eqref{eq:bojanowski1} below),
which can be found in the Master's thesis of Bojanowski \cite{Bo} 
(see also Pokora \cite{Po}).
This inequality from \cite{Bo} is in turn based on the work of  Langer \cite{L}, 
so Han's result is indirectly derived from \cite{L}, 
but it can also be obtained directly from Langer \cite[Proposition 11.3.1]{L}, i.e., from Theorem \ref{thm:langer}.

Let us introduce Han's improvement of the weak Dirac conjecture.
The strong Dirac conjecture \cite{D} states that there is a constant $c$ such that any set of $n$ non-collinear points in $\R^2$ contains a point that lies on at least $n/2-c$ lines spanned by the point set.
The weak Dirac conjecture states that there is a constant $c'>0$ such that any set of $n$ non-collinear points in $\R^2$ has a point on at least $c'n$ spanned lines.
This weak form was proved by Beck \cite{Be} and Szemer\'edi and Trotter \cite{ST}, in both cases with $c'$ unspecified but very small.
Payne and Wood \cite{PW} obtained $c' = 1/37$, 
Pham and Phi \cite{PP} improved it to $c'=1/26$,
and Han \cite{Ha} made a large jump to $c' = 1/3$. 
We include the proof from Theorem \ref{thm:langer} here for the sake of exposition.

\begin{corollary}[Han]\label{cor:han}
Let $P$ be a set of $n$ points in $\C^2$, not contained in a line.
Then there is a point in $P$ that is contained in at least $(n+3)/3$ lines spanned by $P$.
\end{corollary}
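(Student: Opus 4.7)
The plan is to combine Theorem \ref{thm:langer} with a pigeonhole argument, after splitting into two cases depending on whether the hypothesis of Langer's theorem is satisfied.

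First I would handle the easy case: suppose there is a line $\ell$ containing more than $2n/3$ points of $P$. Since $P$ is not collinear, choose a point $p \in P \setminus \ell$. The lines joining $p$ to the points of $P\cap \ell$ are pairwise distinct (two such lines would share $p$ and a point of $\ell$, hence coincide, but then $p\in\ell$, a contradiction). Hence $p$ lies on at least $|P\cap\ell| > 2n/3$ spanned lines, which is at least $(n+3)/3$ provided $n\geq 3$. The edge cases $n\leq 2$ are trivial since a non-collinear set has $n\geq 3$.

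In the main case, at most $2n/3$ points of $P$ are collinear, so Theorem \ref{thm:langer} applies and gives $\sum_{i\geq 2} i\ell_i \geq n(n+3)/3$. By the middle identity in \eqref{eq:basic}, the left-hand side equals $I(P,L(P))$, which equals $\sum_{p\in P} d(p)$, where $d(p)$ denotes the number of spanned lines through $p$. By the pigeonhole principle, some point $p \in P$ satisfies $d(p) \geq (n+3)/3$, as required.

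The main (and only) obstacle is the case where a dominant collinear subset evades Langer's hypothesis; I expect this to be resolved cleanly by the elementary counting argument above. Everything else is bookkeeping: Langer's theorem does the heavy lifting, and the corollary follows by averaging incidences over the $n$ points.
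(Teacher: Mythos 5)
Your proof is correct and follows essentially the same two-case argument as the paper: if some line has more than $2n/3$ points, a point off that line already lies on more than $2n/3 \geq (n+3)/3$ spanned lines, and otherwise Langer's inequality plus averaging over the $n$ points gives the result. The extra care you take in verifying $2n/3\geq (n+3)/3$ for $n\geq 3$ and the distinctness of the joining lines is fine but not a departure from the paper's route.
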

\begin{proof}
If there is a line $\ell\in L(P)$ containing more than $2n/3$ points of $P$, 
then any point off $\ell$ lies on more than $2n/3$ lines  of $L(P)$. Thus we can assume that $P$ has at most $2n/3$ points collinear, so that Theorem \ref{thm:langer} gives
\[I(P,L(P)) = \sum_{i\geq 2} i \ell_i\geq \frac{n(n+3)}{3}.\]
It follows that there is a point $p\in P$ that is involved in at least $(n+3)/3$ incidences, i.e., it lies on that many spanned lines.
\end{proof}

Remarkably, Theorem \ref{thm:langer} and Corollary \ref{cor:han} are both tight in $\C^2$, by Construction \ref{con:hesse} below.
On the other hand, that construction is not possible in $\R^2$, so there may be room for improvement over $\R$.

\paragraph{Hirzebruch-type inequalities.}
Let us clarify what is meant by a ``Hirzebruch-type inequality''.
For a point set $P$ in $\R^2$ that is not collinear, \emph{Melchior's inequality} \cite{M} states
\begin{equation}\label{eq:melchior}
\ell_2 \geq 3 + \sum_{i\geq 4} (i-3)\ell_i.
\end{equation}
This inequality, which can be derived from Euler's formula, has many applications in combinatorial geometry.
For instance, 
it implies the Sylvester--Gallai theorem, which says that $\ell_2>0$ for any non-collinear point set (see for instance \cite{GT}).
Melchior's inequality is false in $\C^2$, 
again because of Construction \ref{con:hesse} below.

For a point set $P$ in $\C^2$ (or in $\R^2$) with at most $n-3$ points collinear, \emph{Hirzebruch's inequality} \cite{Hi} states that
\begin{equation}\label{eq:hirzebruch}
\ell_2 + \frac{3}{4}\ell_3 \geq n + \sum_{i\geq 5} (2i-9)\ell_i.
\end{equation}
One particular consequence is that $\ell_2 +\ell_3>0$ for any $n$ points in $\C^2$ with at most $n-3$ collinear.
Kelly \cite{K} used this fact to solve a problem of Serre.

Bojanowski \cite{Bo} and Pokora \cite{Po} used Langer's work \cite{L} to prove an inequality of a similar form.
For a point set $P$ in $\C^2$ with at most $2n/3$ points collinear, we have 
\begin{equation}\label{eq:bojanowski1}
\ell_2 + \frac{3}{4}\ell_3 \geq n + \sum_{i\geq 5} \frac{i^2-4i}{4}\ell_i,
\end{equation}
or equivalently
\begin{equation}\label{eq:bojanowski2}
\sum_{i\geq 2} (4i-i^2)\ell_i \geq 4n.
\end{equation}
This inequality is equivalent to \eqref{eq:langer}:
Moving $\sum(i-i^2)\ell_i$ to the right in \eqref{eq:bojanowski2}, we get
\[ 3\sum_{i\geq 2} i\ell_i \geq 4n + \sum_{i\geq 2} (i^2-i)\ell_i 
=4n + 2\sum_{i\geq 2} \binom{i}{2} \ell_i
= 4n + 2\binom{n}{2}
=n^2+3n.
\]

The inequality \eqref{eq:bojanowski1} is at least as good as \eqref{eq:hirzebruch}, 
and strictly better whenever there is a line with at least five points.
Of course, its condition on the number of collinear points is also stronger,
but in combinatorial problems the case where many points are collinear can usually be handled in other ways.


\paragraph{Constructions.}
As remarked by Langer \cite[Example 11.3.2]{L}, 
the inequality \eqref{eq:langer} is tight in $\C^2$ for infinitely many $n$, because of the following configuration.

\begin{construction}[Fermat configuration]\label{con:hesse}
For $n$ divisible by $3$, there is a set $P$ of $n$ points in $\C^2$ with the following properties.
The points are contained in three non-concurrent lines, with $n/3$ points on each line.
For one point of $P$ on one of the three lines and another point of $P$ on another of the lines, the line through the two points hits the third line in a point of $P$.
Thus $P$ spans $(n/3)^2$ lines with three points, 
and none with two points.
Therefore we have
\[\sum_{i\geq 2}i\ell_i = \left(\frac{n}{3}\right)^2\cdot 3 + 3\cdot \frac{n}{3} = \frac{n(n+3)}{3},\]
showing that \eqref{eq:langer} is best possible.
Each point lies on exactly $(n+3)/3$ spanned lines, showing that Corollary \ref{cor:han} is tight.
In total there are $n^2/9+3$ spanned lines.
\end{construction}

For a more explicit description of the construction, see \cite[Example 1]{BDSW}.
Note that by adding or removing a few points, 
one can obtain constructions for any $n$ with a similar number of incidences (although one loses the property that no line has two points).
Pokora \cite[Remark 2.5]{Po} lists several other examples for which \eqref{eq:langer} is tight, but these do not appear to be part of an infinite family.

A crucial point is that Construction \ref{con:hesse} cannot be realized in $\R^2$, 
since it would violate the Sylvester--Gallai theorem.
The best construction in $\R^2$ that we are aware of is the following example, which is ascribed to B\"or\"oczky in \cite{CM}.

\begin{construction}[B\"or\"oczky's example]\label{con:boroczky}
For $n$ divisible by $2$, there is a set $P$ of $n$ points in $\R^2$ with $n/2$ points on a conic and $n/2$ points on a line that is disjoint from the conic,
satisfying the following properties.
For any two points of $P$ on the conic, the line through them hits the disjoint line in a point of $P$; this gives $\binom{n/2}{2}$ lines with three points.
The only other spanned lines are the $n/2$ tangent lines to the conic at points of $P$, each of which hits another point of $P$ on the disjoint line.
Thus we have
\[\sum_{i\geq 2}i\ell_i = \frac{n}{2}\cdot 2 + \binom{n/2}{2}\cdot 3 + 1\cdot \frac{n}{2} 
= \frac{3n(n+2)}{8}.\]
Each point of $P$ on the conic lies on $n/2$ spanned lines,
and the total number of spanned lines is $\binom{n/2}{2}+n/2+1\geq n^2/8$.
\end{construction}

See \cite[Section 2]{GT} for a more detailed description, including an analysis of the constructions one obtains by adding or removing points.

\newpage
Construction \ref{con:boroczky} shows that,
although it is possible that Theorem \ref{eq:langer} can be improved in $\R^2$,
one cannot expect more than the following.

\begin{conjecture}
Let $P$ be a set of $n$ points in $\R^2$,
with at most $n/2$ points collinear.
Then
\[\sum_{i\geq 2} i \ell_i\geq \frac{3}{8}n^2.\]
\end{conjecture}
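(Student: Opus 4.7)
The conjecture is not proved in the paper and I know of no proof; the following is a plan for attempting it, together with what I take to be the main obstacle. The natural setup is a linear-programming argument on the $\ell_i$. Treat the $\ell_i$ for $2 \leq i \leq n/2$ as the variables and minimise $\sum_{i\geq 2} i\ell_i$ subject to the equality $\sum \binom{i}{2}\ell_i = \binom{n}{2}$ from \eqref{eq:basic}, Bojanowski's inequality \eqref{eq:bojanowski2} (which applies because the collinearity hypothesis $\leq n/2$ is stronger than $\leq 2n/3$), and Melchior's inequality \eqref{eq:melchior}, the last of which is available because we are in $\R^2$. The B\"or\"oczky configuration of Construction \ref{con:boroczky} is an LP-feasible point with $\ell_2 = n/2$, $\ell_3 = \binom{n/2}{2}$, $\ell_{n/2} = 1$, at which both Melchior and the basic equality are tight, so the relaxation does attain the conjectured value on a geometric configuration.

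To close the LP one would seek dual multipliers $\gamma \in \R$ for the basic equality, $\alpha \geq 0$ for Melchior, and $\beta \geq 0$ for Bojanowski, satisfying
\[
\gamma \binom{i}{2} + \alpha(3-i) + \beta(4i - i^2) \leq i \qquad \text{for all } 2 \leq i \leq n/2,
\]
and giving dual objective $\gamma \binom{n}{2} + 3\alpha + 4n\beta \geq 3n^2/8$. Matching the leading $n^2$ coefficient forces $\gamma \to 3/4$; controlling the $i^2$ behaviour at $i = n/2$ then forces $\beta \to 3/8$; and the $i = 2$ constraint degenerates to $\alpha + 4\beta + \gamma \leq 2$, i.e.\ $\alpha \leq -1/4$, which is infeasible. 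Throwing in the Green--Tao ordinary-line bound $\ell_2 \geq n/2$ or Hirzebruch's inequality \eqref{eq:hirzebruch} introduces additional positive contributions on the left at $i = 2$, and so does not repair the infeasibility.

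The main obstacle is therefore to find an extra ingredient that is tight on the B\"or\"oczky extremum yet strictly stronger than the existing constraints at $i = 2$. One avenue is a conditional $\R^2$-inequality that strengthens Melchior whenever a spanned line is close to the collinearity bound, essentially encoding Sylvester--Gallai-type behaviour near such a long line. A second, more combinatorial avenue is to split according to the length $k$ of the longest spanned line: for $k$ substantially below $n/2$ one would hope to strengthen Langer's inequality using the tighter collinearity hypothesis, while for $k$ close to $n/2$ one analyses the $n-k$ off-line points directly and counts the incidences they force through the long line, in the style of the analysis of Construction \ref{con:boroczky}. Calibrating the two regimes so that the bound $3n^2/8$ is attained throughout, rather than merely some weaker quadratic bound, is where the real work lies.
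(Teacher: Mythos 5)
This statement is stated in the paper as a conjecture, motivated by Construction \ref{con:boroczky}, and the paper offers no proof of it; there is therefore nothing to compare your attempt against, and you are right not to claim a proof. Your supporting analysis is sound as far as it goes: the LP duality computation is correct --- with $\gamma \to 3/4$ forced by the leading term of $\sum \binom{i}{2}\ell_i = \binom{n}{2}$, the constraint at $i$ near $n/2$ forces $\beta \geq \gamma/2 \to 3/8$, and then the $i=2$ constraint $\gamma + \alpha + 4\beta \leq 2$ indeed becomes infeasible for $\alpha \geq 0$ --- so no nonnegative combination of Melchior, Bojanowski/Langer, and the counting identity alone can yield the conjectured constant $3/8$, and any proof must use the stronger collinearity hypothesis ($n/2$ rather than $2n/3$) or new geometric input in an essential way. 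This is consistent with the paper's own framing, which only establishes the weaker bound $\sum i\ell_i \geq n(n+3)/3$ under the $2n/3$ hypothesis and presents the $3/8$ bound as the best one could hope for in $\R^2$ given B\"or\"oczky's example. The conjecture remains open, and your proposal should not be mistaken for a proof of it.
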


If this conjecture holds, it would imply a further improvement to the weak Dirac conjecture.
At the same time, Construction \ref{con:boroczky} shows that such a bound cannot directly prove the strong Dirac conjecture. 

Note that the strong Dirac conjecture does hold for Construction \ref{con:boroczky},
because any point on the conic lies  on $n/2$ spanned lines.
Perhaps one could prove the strong Dirac conjecture by showing that Construction \ref{con:boroczky} is in some sense the only non-collinear example (up to projective equivalence and small modifications) in $\R^2$ with at most $n/2$ points collinear and with fewer than $n^2/2$ incidences.
Indeed, the next best examples that we know have roughly $n^2/2$ incidences, namely two lines with $n/2$ points each, 
and Sylvester's group construction on an irreducible cubic (see \cite[Proposition 2.6]{GT}).

\paragraph{Other applications.}
We think that Theorem \ref{thm:langer} will have many more applications in combinatorial geometry.
Let us mention two examples.

Fulek et al. \cite{F} proved that there exists a $c>0$ such that if a finite point set in $\R^2$ is not covered by two lines, then there are three points in the set such that all three lines spanned by them contain at most $c$ points (they call this a \emph{$c$-ordinary triangle}).
Their proof gave $c=12000$.
Dubroff \cite{D} finds a more efficient argument that, with the help of Theorem \ref{thm:langer} and some of the corollaries obtained below, proved the same statement with $c=11$.

De Zeeuw \cite{DZ} proves that for any $\alpha>0$ there exists $c_\alpha>0$ such that if $n$ points in $\R^3$ have at most $\alpha n$ points on a plane, then they span at least $c_{\alpha} n^2$ lines with exactly two points. 
This result can be proved without Theorem \ref{thm:langer},
but using it gives far better values for $c_\alpha$, and also simplifies the proof.

\newpage
\section{Spanned lines in \texorpdfstring{$\R^2$}{the real plane}}

We consider a theorem of Beck \cite[Theorem 3.1]{Be},
sometimes referred to as ``Beck's theorem of two extremes''.
It states that $n$ points either span a line with $c_1n$ points, or they span $c_2n^2$ points.
Beck gave $c_1 = 1/100$ and left $c_2$ unspecified, 
but his proof would clearly give a very small value;
Payne \cite[Theorem 2.4]{Pa} wrote out a proof with $c_1 = c_2 = 2^{-15}$.
Payne and Wood \cite{PW} gave a more refined argument that roughly gives $c_1=c_2= 1/100$ (it is not stated explicitly in the paper, but in \cite[Theorem 5]{PW} one can for instance put $\ell =1/100$).

Here we use Langer's inequality to obtain much better constants.
The idea of the proof can already be found in Kelly and Moser \cite[Equation 4.62]{KM}.

\begin{theorem}\label{thm:beckreal}
For any set $P$ of $n$ points in $\R^2$,
one of the following is true:
\begin{itemize}
\item There is a line that contains more than $\alpha n$ points of $P$, with $\alpha = (6+\sqrt{3})/9 > 0.85$;
\item There are at least $n^2/9$ lines spanned by $P$.
\end{itemize}
\end{theorem}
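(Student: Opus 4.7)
The plan is to let $k$ denote the maximum number of points of $P$ on a single line, and to split the argument according to the size of $k$. If $k > \alpha n$, the first alternative of the theorem holds and there is nothing to prove, so we may assume $k \leq \alpha n$ and aim to show $|L(P)| \geq n^2/9$. In particular, $P$ is non-collinear, so both Langer's inequality and Melchior's inequality \eqref{eq:melchior} are available in what follows. I would then handle the two subranges $k \leq 2n/3$ and $2n/3 < k \leq \alpha n$ separately, since Theorem \ref{thm:langer} only applies in the first.

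In the first subrange, $k \leq 2n/3$, Theorem \ref{thm:langer} gives $I(P,L(P)) \geq n(n+3)/3$. To convert this into a lower bound on $|L(P)|$, I would add $\ell_3 + \sum_{i \geq 4}\ell_i$ to both sides of Melchior's inequality \eqref{eq:melchior}, which, after using $\sum_{i\geq 3}(i-2)\ell_i = I(P,L(P)) - 2|L(P)|$, yields
\[
|L(P)| \;\geq\; 3 + I(P,L(P)) - 2|L(P)|,
\]
and hence $|L(P)| \geq (I(P,L(P)) + 3)/3 \geq (n^2 + 3n + 9)/9 > n^2/9$.

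In the second subrange, $2n/3 < k \leq \alpha n$, Langer's inequality no longer applies, and I would instead count lines directly, in the style of Kelly and Moser. Fix a line $\ell_0$ containing $k$ points of $P$, and let $p_1,\dots,p_m$ be the $m = n-k$ points of $P$ off $\ell_0$. Each $p_j$ determines $k$ distinct spanned lines to the points of $\ell_0$, and two such bundles share at most one line (the line $\overline{p_ip_j}$, in case it hits $\ell_0$ at a point of $P$). Inclusion--exclusion then gives
\[
|L(P)| \;\geq\; mk - \binom{m}{2} \;=\; \frac{(n-k)(3k-n+1)}{2}.
\]
Writing $k = \beta n$, this is at least $n^2/9$ provided $(1-\beta)(3\beta-1) \geq 2/9$, and the quadratic $(1-\beta)(3\beta-1) = 2/9$ has roots exactly at $\beta = (6 \pm \sqrt{3})/9$, so the inequality holds on the whole interval $(2/3, \alpha]$. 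This is precisely what pins down the constant $\alpha = (6+\sqrt{3})/9$.

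The main obstacle is conceptual rather than technical: Langer's inequality alone is too weak for the target $n^2/9$ (even Cauchy--Schwarz applied to $\sum i\ell_i$ and $\sum i^2\ell_i$ only extracts roughly $n^2/12$ lines), so the real-plane hypothesis must enter through Melchior to boost the incidence bound to a line count. The second subtlety is that Langer's hypothesis fails once more than $2n/3$ points are collinear, which is why the intermediate range $2n/3 < k \leq \alpha n$ has to be handled by the direct Kelly--Moser counting, and the value of $\alpha$ is then forced by requiring the two regimes to meet at $n^2/9$.
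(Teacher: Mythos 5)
Your proof is correct and follows essentially the same route as the paper: Langer combined with Melchior in the range of at most $2n/3$ collinear points, and the Kelly--Moser count $k(n-k)-\binom{n-k}{2}$ in the intermediate range, with $\alpha$ pinned down by the same quadratic condition $(1-\beta)(3\beta-1)\geq 2/9$. The only cosmetic difference is that the paper adds the rearranged Melchior inequality $\sum_{i\geq 2}(3-i)\ell_i\geq 3$ directly to Langer's inequality, which is algebraically identical to your manipulation via $\sum_{i\geq 3}(i-2)\ell_i = I(P,L(P)) - 2|L(P)|$.
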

\begin{proof}
First assume that $P$ has at most $2n/3$ points collinear, so that by Theorem \ref{thm:langer} inequality \eqref{eq:langer} holds.
We can rearrange Melchior's inequality \eqref{eq:melchior} to
\[\sum_{i\geq 2} (3-i)\ell_i\geq 3,\]
and then add Langer's inequality \eqref{eq:langer} to get
\[\sum_{i\geq 2} i\ell_i + \sum_{i\geq 2} (3-i)\ell_i 
\geq \frac{n(n+3)}{3} + 3,\]
or equivalently
\begin{equation}\label{eq:manylines}
3|L(P)| =  3\sum_{i\geq 2} \ell_i \geq \frac{n^2+3n+9}{3}.
\end{equation}
Thus $|L(P)|\geq n^2/9$, which proves the second alternative.

Now suppose that $P$ has more than $2n/3$ points collinear.
Let $L$ be the line with more than $2n/3$ points of $P$;
say $|P\cap L| = \alpha n$ and $|P\backslash L| = (1-\alpha)n$.
Then we count one line for every choice of a point from $P\cap L$ and a point from $P\backslash L$, but we may overcount once for every pair of points from $P\backslash L$ (when the line through that pair hits $L$ in a point of $P$), so 
\[|L(P)| \geq \alpha n\cdot (1-\alpha )n - \binom{(1-\alpha )n}{2} \geq \left(-\frac{3}{2}\alpha^2 +2\alpha  - \frac{1}{2}\right)n^2.
\]
As long as we have $-3\alpha^2/2 +2\alpha - 1/2 \geq 1/9$, the second alternative of the theorem holds.
Solving for $\alpha$ shows that this is the case for $\alpha\leq (6+\sqrt{3})/9$.
Otherwise,
the first alternative holds.
\end{proof}



The best example in $\R^2$ that we know of, in terms of having few spanned lines while having less than $2n/3$ points collinear,
is Construction \ref{con:boroczky}, which spans roughly $n^2/8$ lines while having at most $n/2$ points collinear.
Thus it seems that a small improvement to Theorem \ref{thm:beckreal} is still possible (one would have to decrease $\alpha$ in the first alternative to make this possible).

\newpage
Beck \cite{Be} also proved the closely related statement that if $n$ points have exactly $n-k$ collinear, 
then they determine at least $c_3kn$ lines.
This solved a problem that was often mentioned by Erd\H os (see for instance \cite[Section 4]{Er}).
Again Beck did not specify the constant, 
but Payne and Wood \cite[Theorem 5]{PW} refined the proof to obtain $c_3 = 1/98$ (and Payne \cite{Pa} refined this to $1/93$).
Langer's inequality gives an improvement, using essentially the same proof as Beck.

\begin{corollary}
Let $P$ be a set of $n$ points in $\R^2$ with at most $n-k$ points contained in any line.
Then 
\[|L(P)| \geq \frac{1}{9}kn.\]
\end{corollary}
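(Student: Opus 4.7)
The plan is to recycle the two-case split from the proof of Theorem \ref{thm:beckreal}, based on whether the maximum number $m$ of collinear points of $P$ exceeds $2n/3$, and to show that in each case $|L(P)| \geq kn/9$. The assumption $m \leq n-k$ is what converts both bounds into multiples of $k$: in the small-$m$ regime I will use $k \leq n$, and in the large-$m$ regime I will use $k \leq n-m$.

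If $m \leq 2n/3$, then Theorem \ref{thm:langer} applies, and combining \eqref{eq:langer} with Melchior's inequality \eqref{eq:melchior} exactly as in the first half of the proof of Theorem \ref{thm:beckreal} yields $|L(P)| \geq n^2/9$. Since $k \leq n$, this already gives $|L(P)| \geq kn/9$, which is what we want.

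If $m > 2n/3$, fix a line $L$ containing $m$ points of $P$. The same elementary double-count used in the second half of the proof of Theorem \ref{thm:beckreal}---one line for every choice of a point on $L$ and a point off $L$, with the overcount controlled by $\binom{n-m}{2}$---gives $|L(P)| \geq m(n-m) - \binom{n-m}{2} = (n-m)(3m-n+1)/2$. The condition $m > 2n/3$ forces $3m - n + 1 > n$, so $|L(P)| > (n-m)n/2$, and then $n-m \geq k$ finishes the bound as $|L(P)| \geq kn/2 \geq kn/9$.

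I do not expect any real obstacle, since both cases are direct specializations of steps already executed in the proof of Theorem \ref{thm:beckreal}; the only minor bookkeeping is that for $k \geq 1$ the hypothesis $m \leq n-k$ automatically ensures that $P$ is non-collinear, so Melchior's inequality is available in the first case (and the bound is trivial if $k = 0$). It is worth noting that the second case in fact delivers the much stronger estimate $|L(P)| \geq kn/2$, so the constant $1/9$ in the corollary is limited entirely by the small-$m$ regime; any future improvement of Theorem \ref{thm:langer} in $\R^2$ would therefore carry through to this corollary directly.
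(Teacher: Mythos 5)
Your proof is correct and follows essentially the same route as the paper's: both combine the $n^2/9$ bound coming from Theorem \ref{thm:beckreal} with the elementary on-line/off-line double count in the heavily collinear case. The only difference is that you split cases at $2n/3$ (inlining the proof of Theorem \ref{thm:beckreal}) rather than at $\alpha n$ (citing its statement), which incidentally yields the stronger estimate $|L(P)|\geq kn/2$ in that case.
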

\begin{proof}
If the second alternative of Theorem \ref{thm:beckreal} holds, then we are done, since $n^2/9 \geq kn/9$.
Suppose that the first alternative holds, so we have $k\leq (1-\alpha)n$.
Then we count
\[ k(n-k) - \binom{k}{2} = k\left(n-k - \frac{k-1}{2}\right) \geq k\left(n- \frac{3}{2}k\right) \geq \frac{3\alpha-1}{2} kn > \frac{1}{9}kn\]
lines.
\end{proof}

Erd\H os specifically asked for $ckn$ lines with a constant $c$ independent of $k$ and $n$, 
and in that sense doing better than $c=1/9$ would require an improvement in Theorem \ref{thm:beckreal}.
In Sylvester's cubic curve construction (see \cite{GT}), we have $k= n-3$ (at most three points are collinear), and we have about $n^2/6$ spanned lines, which shows that we cannot do better than $c = 1/6$.


\bigskip

The following corollary gives more detailed information on the spanned lines with very few points.
The idea, due to Elliott \cite[proof of Theorem 2]{El}, 
is that in any non-collinear point set at least half the spanned lines have at most three points; 
see also Purdy and Smith \cite[Lemma 2.2]{PS} or Payne and Wood \cite[Observation 15]{PW}.

\begin{corollary}\label{cor:twothreereal}
Let $P$ be a set of $n$ points in $\R^2$ with at most $\alpha n$ points collinear, for $\alpha = (6+\sqrt{3})/9\approx 0.85$.
Then
\[ \ell_2+\ell_3 \geq \frac{n^2}{18}.\]
\end{corollary}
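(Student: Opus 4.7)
The plan is to combine Theorem~\ref{thm:beckreal} with Elliott's observation about the proportion of spanned lines that are sparse (have $2$ or $3$ points). The hypothesis ``at most $\alpha n$ points collinear'' with $\alpha = (6+\sqrt{3})/9$ matches exactly the negation of the first alternative in Theorem~\ref{thm:beckreal}, so Theorem~\ref{thm:beckreal} immediately yields the lower bound
\[
|L(P)| = \sum_{i\geq 2}\ell_i \;\geq\; \frac{n^2}{9}.
\]

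It therefore remains to show that at least half of these spanned lines are in $\ell_2 + \ell_3$, i.e., $\ell_2 + \ell_3 \geq \tfrac{1}{2}\sum_{i\geq 2}\ell_i$, which is equivalent to
\[
\ell_2 + \ell_3 \;\geq\; \sum_{i\geq 4}\ell_i.
\]
This is Elliott's bound, and it follows directly from Melchior's inequality \eqref{eq:melchior}: since $i-3 \geq 1$ for all $i\geq 4$,
\[
\ell_2 \;\geq\; 3 + \sum_{i\geq 4}(i-3)\ell_i \;\geq\; \sum_{i\geq 4}\ell_i,
\]
which is even stronger than what we need (and already gives $\ell_2 \geq \sum_{i\geq 4}\ell_i$ by itself). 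Chaining these inequalities,
\[
\ell_2 + \ell_3 \;\geq\; \frac{1}{2}\sum_{i\geq 2}\ell_i \;\geq\; \frac{1}{2}\cdot \frac{n^2}{9} \;=\; \frac{n^2}{18},
\]
as desired.

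I don't anticipate any real obstacle: both ingredients are already available in the paper (Theorem~\ref{thm:beckreal} for the total line count and Melchior's inequality \eqref{eq:melchior} for the sparse-line proportion), and the combination is essentially bookkeeping. The only subtle point is checking that the hypothesis of the corollary plugs into Theorem~\ref{thm:beckreal} with the correct constant $\alpha$; since the thresholds coincide, the first alternative is ruled out and the second alternative applies directly. Everything else is a one-line arithmetic step.
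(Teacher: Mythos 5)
Your proposal is correct and follows essentially the same route as the paper: Melchior's inequality shows that the $2$- and $3$-point lines account for at least half of $|L(P)|$, and the hypothesis rules out the first alternative of Theorem~\ref{thm:beckreal}, so $|L(P)|\geq n^2/9$ and hence $\ell_2+\ell_3\geq n^2/18$. The only cosmetic difference is that the paper adds $\ell_2+2\ell_3$ to both sides of Melchior's inequality, while you observe directly that $\ell_2\geq\sum_{i\geq 4}\ell_i$; the two manipulations are equivalent.
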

\begin{proof}
Writing Melchior's inequality \eqref{eq:melchior} as
\[\ell_2 \geq 3 + \sum_{i\geq 4} (i-3) \ell_i\]
and adding $\ell_2 + 2\ell_3$ on both sides  gives
\[2\ell_2+2\ell_3 \geq 3 + \ell_2 + 2\ell_3 + \sum_{i\geq 4} (i-3) \ell_i \geq 3 + \sum_{i\geq 2} \ell_i.\]
In other words, more than half the spanned lines have two or three points.
Thus, by Theorem \ref{thm:beckreal},
we have $\ell_2 + \ell_3 \geq (n^2/9)/2 = n^2/18$.
\end{proof}


\newpage
\section{Spanned lines in \texorpdfstring{$\C^2$}{the complex plane}}

As mentioned in the introduction, 
Melchior's inequality fails in $\C^2$, so the arguments of the previous section do not work there.
Nevertheless, 
we can obtain bounds that are slightly weaker than in $\R^2$, using only the Cauchy--Schwarz inequality (and Langer's inequality, of course).

Beck's theorem of two extremes is known to hold in $\C^2$, because it follows from the Szemer\'edi--Trotter theorem, which was proved in $\C^2$ by T\'oth \cite{T} and Zahl \cite{Z}.
However, their proofs would give extremely small constants in Beck's theorem.
Langer's inequality gives reasonable constants.

\begin{theorem}\label{thm:beckcomplex}
For any set $P$ of $n$ points in $\C^2$,
one of the following is true:
\begin{itemize}
\item There is a line that contains more than $\beta n$ points of $P$, with $\beta = (4+\sqrt{2})/6 >0.90$;
\item There are at least $n^2/12$ lines spanned by $P$.
\end{itemize}
\end{theorem}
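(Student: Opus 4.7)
The plan is to follow the same two-case structure as the proof of Theorem \ref{thm:beckreal}, but to replace Melchior's inequality (which fails over $\C$) with an application of the Cauchy--Schwarz inequality. The identities in \eqref{eq:basic} still hold over $\C^2$, so they can be combined freely with Langer's bound.

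First I assume that no line contains more than $2n/3$ points of $P$, so that Theorem \ref{thm:langer} gives $\sum_{i\geq 2} i\ell_i \geq n(n+3)/3$. Applying the Cauchy--Schwarz inequality to the vectors $(\sqrt{\ell_i})_i$ and $(i\sqrt{\ell_i})_i$ yields
\[\left(\sum_{i\geq 2} i\ell_i\right)^{\!2} \leq \left(\sum_{i\geq 2} \ell_i\right)\left(\sum_{i\geq 2} i^2\ell_i\right).\]
Combining the three identities in \eqref{eq:basic} gives $\sum_{i\geq 2} i^2\ell_i = n(n-1) + \sum_{i\geq 2} i\ell_i$, so writing $S = \sum_{i\geq 2} i\ell_i$ one obtains $|L(P)| \geq S^2/(S+n(n-1))$. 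The right-hand side is increasing in $S$ for $S>0$, and plugging in $S \geq n(n+3)/3$ causes the denominator to collapse to $4n^2/3$; after simplification one finds $|L(P)| \geq (n+3)^2/12 > n^2/12$, which gives the second alternative.

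For the remaining case, suppose some line $L$ contains $\alpha n$ points of $P$ with $\alpha > 2/3$. I would reuse the overcounting argument from the proof of Theorem \ref{thm:beckreal}: counting one line for every pair consisting of a point of $P\cap L$ and a point of $P\setminus L$, and subtracting $\binom{(1-\alpha)n}{2}$ for possible overcounting, gives
\[|L(P)| \geq \alpha n(1-\alpha)n - \binom{(1-\alpha)n}{2} \geq \left(-\tfrac{3}{2}\alpha^2 + 2\alpha -\tfrac{1}{2}\right)n^2.\]
The second alternative follows as long as $-3\alpha^2/2 + 2\alpha - 1/2 \geq 1/12$, which rearranges to $18\alpha^2 - 24\alpha + 7 \leq 0$. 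The larger root of this quadratic is exactly $(4+\sqrt{2})/6 = \beta$, so the bound succeeds whenever $\alpha \leq \beta$; in the remaining case $\alpha > \beta$ the first alternative of the theorem holds.

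There is really no serious obstacle here once one notices that Cauchy--Schwarz replaces Melchior's inequality in the complex setting. The main things to verify are that Langer's quadratic bound on $\sum i\ell_i$ combined with the identity $\sum i^2\ell_i = n(n-1) + \sum i\ell_i$ still produces an $n^2$ lower bound on $|L(P)|$ through Cauchy--Schwarz, and that the resulting constants line up to give exactly $1/12$ and $(4+\sqrt{2})/6 > 0.90$. The slight loss compared with the real case (where the constants are $1/9$ and $(6+\sqrt{3})/9$) is the expected price for trading Melchior for Cauchy--Schwarz.
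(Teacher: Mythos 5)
Your proposal is correct and follows essentially the same route as the paper: in the main case it combines Langer's inequality with Cauchy--Schwarz (your form $\bigl(\sum i\ell_i\bigr)^2 \leq \bigl(\sum \ell_i\bigr)\bigl(\sum i^2\ell_i\bigr)$ is the same inequality the paper applies to $\sum_{\ell}|\ell\cap P|^2$), and in the collinear case it reuses the overcounting bound from Theorem~\ref{thm:beckreal}, arriving at the same threshold $(4+\sqrt{2})/6$. The constants and the monotonicity check all line up with the paper's computation.
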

\begin{proof}
We have
\[\binom{n}{2} 
= \sum_{\ell\in L(P)} \binom{|\ell\cap P|}{2}
 = \frac{1}{2}\sum_{\ell\in L(P)} |\ell\cap P|^2 -  \frac{1}{2}\sum_{\ell\in L(P)} |\ell\cap P|,
\]
so by the Cauchy-Schwarz inequality we get (writing $m =|L(P)|$ and $I = I(P,L(P))$)
\[ 2\binom{n}{2}
\geq \frac{1}{m} \left(\sum_{\ell\in L(P)} |\ell\cap P|\right)^2 -  \sum_{\ell\in L(P)} |\ell\cap P|
 = \frac{I^2}{m} - I
 =I\left( \frac{I}{m} - 1\right).
\]
Assuming that $P$ has at most $2n/3$ points collinear, 
Langer's inequality \eqref{eq:langer} gives
\[n^2-n \geq \frac{n(n+3)}{3}\left(\frac{n(n+3)}{3m} - 1\right),
\]
which is equivalent to
\begin{equation}\label{eq:spannedcomplex}
|L(P)| = m \geq \frac{(n+3)^2}{12}.
\end{equation}

On the other hand, if $P$ has more than $2n/3$ points collinear, then as in the proof of Theorem \ref{thm:beckreal}, if we have
$-3a^2/2 +2a - 1/2 \geq 1/12$, then the second alternative holds.
Solving for $a$ shows that this is the case for $a\leq (4+\sqrt{2})/6$.
Otherwise,
the first alternative holds.
\end{proof}

In Construction \ref{con:hesse} we have at most $n/3$ points collinear and $n^2/9+3$ spanned lines, which suggests that some improvement to Theorem \ref{thm:beckcomplex} is possible.

\bigskip

We would have liked to prove a complex analogue of Corollary \ref{cor:twothreereal} (see \cite[Section 4]{DZ} for an application where it would be very useful),
but we do not know how to do that without Melchior's inequality.
Here is an attempt that comes close.
By \eqref{eq:bojanowski1},
we have
\[\ell_2+\ell_3\geq \ell_2 + \frac{4}{3}\ell_3 \geq
\sum_{i\geq 5}\frac{i^2-4i}{4} \ell_i
\geq \sum_{i\geq 5}\ell_i,\]
so, adding $\ell_2+\ell_3+\ell_4$ on both sides, 
\[2(\ell_2+\ell_3) +\ell_4\geq  \sum_{i\geq 2}\ell_i = |L(P)|.\]
From \eqref{eq:basic} we have $6\ell_4\leq \binom{n}{2}\leq n^2/2$, so $\ell_4\leq n^2/12$,
which together with \eqref{eq:spannedcomplex} gives
\[\ell_2+\ell_3\geq \frac{1}{2}\left(|L(P)|-\ell_4\right) \geq \frac{1}{2}\left(\frac{n^2 + 6n +9}{12}-\frac{n^2}{12}\right)
=\frac{1}{4}n +\frac{3}{8}. \]

The problem seems to be that \eqref{eq:bojanowski1} gives no control over $\ell_4$.
This bound is actually worse than the bound $\ell_2+\ell_3\geq n$ that follows directly from inequality \ref{eq:hirzebruch} or \ref{eq:bojanowski1}, 
but we think this argument is worth  mentioning, 
because it shows that any bound of the form $\ell_4<cn^2$ with $c< 1/12$ would give a quadratic lower bound on $\ell_2+\ell_3$.
In $\R^2$, Brass \cite{Br} proved $\ell_4< n^2/14$ using Melchior's inequality, but again we have no equivalent in $\C^2$.

However,
we can obtain the following bound on the number of lines with five or more points.
A similar statement using \eqref{eq:hirzebruch} can be found in Payne \cite[Observation 3.18]{Pa}.

\begin{corollary}\label{cor:manypoorcomplex}
Let $P$ be a set of $n$ points in $\C^2$ with at most $2n/3$ points collinear.
Then for any $k\geq 5$ we have
\begin{equation}\label{eq:richlines}
\sum_{i \geq k} \ell_i \leq \frac{4}{(k-2)^2}|L(P)|.
\end{equation}
\end{corollary}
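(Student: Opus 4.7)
The plan is to massage the Bojanowski inequality \eqref{eq:bojanowski1} so that the weights on $\ell_i$ become exactly $(i-2)^2/4$, and then exploit the monotonicity of $(i-2)^2$ in $i$ to extract the factor $(k-2)^2$ in the denominator.

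The key algebraic observation is the identity $i^2 - 4i = (i-2)^2 - 4$, which rewrites \eqref{eq:bojanowski1} in the form
\[\ell_2 + \frac{3}{4}\ell_3 \geq n + \sum_{i\geq 5}\frac{(i-2)^2}{4}\ell_i - \sum_{i\geq 5}\ell_i.\]
Rearranging, multiplying by $4$, and dropping the nonnegative $4n$ on the left side gives
\[\sum_{i\geq 5}(i-2)^2\ell_i \leq 4\ell_2 + 3\ell_3 + 4\sum_{i\geq 5}\ell_i.\]
Since $3\ell_3 \leq 4\ell_3 + 4\ell_4$, the right-hand side is at most $4(\ell_2+\ell_3+\ell_4+\sum_{i\geq 5}\ell_i) = 4|L(P)|$.

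To finish, for any $k\geq 5$ and any $i\geq k$ we have $(i-2)^2 \geq (k-2)^2$, so
\[(k-2)^2\sum_{i\geq k}\ell_i \leq \sum_{i\geq k}(i-2)^2\ell_i \leq \sum_{i\geq 5}(i-2)^2 \ell_i \leq 4|L(P)|,\]
which yields \eqref{eq:richlines}.

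The whole argument is a short algebraic manipulation; the only step that required any thought was noticing that the bound $4/(k-2)^2$ stated in the corollary (rather than the weaker $4/(k(k-4))$ one would get by bounding $i^2-4i$ directly) is pointing to the completion of the square $i^2-4i = (i-2)^2-4$, with the resulting $-4$ absorbed into the nonnegative $\sum_{i\geq 5}\ell_i$ term on the left of the rearranged inequality.
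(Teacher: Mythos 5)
Your proof is correct and rests on the same ingredients as the paper's: inequality \eqref{eq:bojanowski1}, the completion of the square $x^2-4x+4=(x-2)^2$, and absorbing the low-multiplicity terms into $|L(P)|$; the paper merely performs the square-completion in $k$ (after first bounding $\tfrac{i^2-4i}{4}\geq\tfrac{k^2-4k}{4}$ for $i\geq k$) rather than in $i$. Your ordering has the small advantage of producing the clean $k$-independent intermediate inequality $\sum_{i\geq 5}(i-2)^2\ell_i\leq 4|L(P)|$, from which all cases of \eqref{eq:richlines} follow at once.
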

\begin{proof}
By \eqref{eq:bojanowski1} we have
\[\sum_{i=2}^{k-1} \ell_i \geq \ell_2 + \frac{3}{4}\ell_3 
\geq n + \sum_{i\geq 5} \frac{i^2-4i}{4}\ell_i
> \frac{k^2-4k}{4}\sum_{i\geq k} \ell_i,\]
so, adding $\frac{k^2-4k}{4}\sum_{i= 2}^{k-1} \ell_i$ on both sides, we get
\[\frac{k^2-4k+4}{4}\sum_{i= 2}^{k-1} \ell_i
> \frac{k^2-4k}{4}\sum_{i\geq 2} \ell_i.\]
Thus we have
\[\sum_{i = 2}^{k-1} \ell_i 
> \frac{(k-2)^2-4}{(k-2)^2}|L(P)|,\]
which implies the statement.
\end{proof}

It follows in particular, using Theorem \ref{thm:beckcomplex},
that
\[\ell_2+\ell_3+\ell_4 
> \frac{5}{9}|L(P)|\geq \frac{5}{108}n^2.\]

Since $|L(P)|\leq \binom{n}{2}$, 
\eqref{eq:richlines} implies that the number of lines with at least $k$ points satisfies
\[\sum_{i \geq k} \ell_i \leq \frac{2n^2}{(k-2)^2}.\]
But note that one can do better if one knows that the number of spanned lines is less than $\binom{n}{2}$ (see \cite{Du} for an application of this observation).

Compare this with the Szemer\'edi--Trotter theorem \cite{ST, T, Z} in $\R^2$, 
which implies that the number of lines with at least $k$ points is at most $cn^2/k^3$ for some constant $c$ (when $k<\sqrt{n}$).
This $c$ is fairly large 
(\cite{F} states $c=125$), so \eqref{eq:richlines} does better for small $k$ (specifically $k\leq 58$).


\newpage


\begin{thebibliography}{99}

\bibitem{BDSW}
Abdul Basit, Zeev Dvir, Shubhangi Saraf, and Charles Wolf,
\emph{On the number of ordinary lines determined by sets in complex space},
{\tt arXiv:1611.08740}, 2016.

\bibitem{Be}
J\'ozsef Beck,
\emph{On the lattice property of the plane and some problems of Dirac, Motzkin and Erd\H os in combinatorial geometry},
Combinatorica {\bf 3}, 281--297, 1983.

\bibitem{Bo}
Rados\l{}aw Bojanowski,
\emph{Zastosowania uog\'olnionej nier\'owno\'sci Bogomolova-Miyaoki-Yau},
Master's thesis at the University of Warsaw, 2003.

\bibitem{Br}
Peter Brass, 
\emph{On point sets without $k$ collinear points},
in: Discrete Geometry--in honor of W. Kuperberg's 60th birthday,
Marcel Dekker, 185--192, 2003.

\bibitem{CM}
Donald Crowe and Terry McKee,
\emph{Sylvester's problem on collinear points},
Mathematics Magazine {\bf 41}, 30--34, 1968.

\bibitem{D}
Gabriel Dirac,
\emph{Collinearity properties of sets of points},
The Quarterly Journal of Mathematics {\bf 2}, 221--227, 1951.

\bibitem{Du}
Quentin Dubroff,
\emph{A better bound for ordinary triangles},
forthcoming.

\bibitem{El}
Peter Elliott,
\emph{On the number of circles determined by $n$ points},
Acta Mathematica Academiae Scientiarum Hungaricae {\bf 18}, 181--188, 1967.

\bibitem{Er}
Paul Erd\H os,
\emph{On some problems of elementary and combinatorial geometry},
Annali di Matematica pura ed applicata (IV) {\bf 103}, 99--108, 1975.

\bibitem{F}
Radoslav Fulek, Hossein Nassajian Mojarrad, M\'arton Nasz\'odi, J\'ozsef Solymosi, Sebastian Stich, and May Szedl\'ak,
\emph{On the existence of ordinary triangles},
Computational Geometry: Theory and applications {\bf 66}, 28--31, 2017.

\bibitem{GT}
Ben Green and Terence Tao,
\emph{On sets defining few ordinary lines},
Discrete \& Computational Geometry {\bf 50}, 409--468, 2013.

\bibitem{Ha}
Zeye Han,
\emph{A note on the Weak Dirac Conjecture},
The Electronic Journal of Combinatorics 
{\bf 24}(1), \#P1.63, 2017.

\bibitem{Hi}
Friedrich Hirzebruch,
\emph{Singularities of algebraic surfaces and characteristic numbers},
The Lefschetz Centennial Conference Part I, 
Contemporary Mathematics {\bf 58}, AMS,
141--155, 1986.

\bibitem{K}
Leroy Kelly,
\emph{A resolution of the Sylvester--Gallai problem of J.-P. Serre},
Discrete \& Computational Geometry {\bf 1}, 101--104, 1986.

\bibitem{KM}
Leroy Kelly and William Moser,
\emph{On the number of ordinary lines determined by $n$ points},
Canadian Journal of Mathematics {\bf 10}, 210--219, 1958.

\bibitem{L}
Adrian Langer,
\emph{Logarithmic orbifold Euler numbers of surfaces with applications},
Proceedings of the London Mathematical Society 
{\bf 86}, 358--396, 2003.

\bibitem{M}
Eberhard Melchior,
\emph{\"Uber Vielseite der projektiven Ebene},
Deutsche Mathematik {\bf 5}, 461--475, 1941.

\bibitem{Pa}
Michael Payne,
\emph{Combinatorial geometry of point sets with collinearities},
PhD thesis, University of Melbourne, 2014.

\bibitem{PW}
Michael Payne and David Wood,
\emph{Progress on Dirac's Conjecture},
The Electronic Journal of Combinatorics {\bf 21}, \#P2.12, 2014.

\bibitem{PP}
Hoang-Ha Pham and Tien-Cuong Phi,
\emph{A new progress on Weak Dirac conjecture},
{\tt arXiv:1607.08398}, 2016.

\bibitem{Po}
Piotr Pokora,
\emph{The orbifold Langer--Miyaoka--Yau inequality and Hirzebruch-type inequalities},
Electronic Research Announcements in Mathematical Sciences {\bf 24}, 21--27, 2017.

\bibitem{PS}
George Purdy and Justin Smith,
\emph{Lines, circles, planes and spheres},
Discrete \& Computational Geometry {\bf 44}, 860--882, 2010.

\bibitem{ST}
Endre Szemer\'edi and William Trotter,
\emph{Extremal problems in discrete geometry},
Combinatorica {\bf 3}, 381--392, 1983.

\bibitem{T}
Csaba T\'oth,
\emph{The Szemer\'edi--Trotter theorem in the complex plane},
Combinatorica {\bf 35}, 95--126, 2015.

\bibitem{Z}
Joshua Zahl,
\emph{A Szemer\'edi--Trotter type theorem in $\R^4$},
Discrete \& Computational Geometry {\bf 54}, 513--572, 2015.

\bibitem{DZ}
Frank de Zeeuw,
\emph{Ordinary lines in space},
forthcoming.

\end{thebibliography}
\end{document}